\newtheorem{thm}{Theorem}
\newtheorem{lem}{Lemma}
\theoremstyle{definition}
\DeclareMathOperator{\1}{\textbf{1}}
\DeclareMathOperator{\id}{id}
\global\long\def\epsilon{\varepsilon}
\begin{document}
\date{}

\title{On sums of arithmetic functions involving the greatest common divisor}
\author{Isao Kiuchi, and Sumaia Saad Eddin}

\maketitle
{\def\thefootnote{}
\footnote{{\it Mathematics Subject Classification 2010: 11A25, 11N37, 11Y60.\\ 
Keywords: The greatest common divisor function, the Piltz divisor function, Euler totient function.}}

\begin{abstract}
Let $\gcd(d_{1},\ldots,d_{k})$ be the greatest common divisor 
of the positive integers $d_{1},\ldots,d_{k}$, for any integer $k\geq 2$,
and let  $\tau$ and  $\mu$ denote the divisor function  
and  the M\"{o}bius function, respectively. For an arbitrary arithmetic function $g$ and for any real number $x>5$ and any integer $k\geq 3$, we define the sum
$$
S_{g,k}(x) :=\sum_{n\leq x}\sum_{d_{1}\cdots d_{k}=n} g(\gcd(d_{1},\ldots,d_{k}))
$$
In this paper, we give asymptotic formulas for $S_{\tau,k}(x)$ and $S_{\mu,k}(x)$ for $k\geq 3$.
\end{abstract}
\maketitle
%%%%%%%%%%%%%%%%%%%%%%%%%%%%%%%%%%%%%%%%%%%%%%%%%%%%%%%%%%%%%%%%%%%%%%%%%%%%%%%%%%%%%%%%%%%%%%%%%%%%%%%%%%%%%%%%%%%%%%%%%%%%%%%%%%%%%%%%%%%%%%%%%%%%%%%%%%%%%%%%%%%%%%%%%%%%%%%%%%%
\section{Introduction and main results}
Let $\gcd(d_{1},\ldots,d_{k})$ be  the greatest common divisor of the integers $d_{1},\ldots,d_{k}$ for any integer $k\geq 2$, and  let  
 $\mu$ and $\tau$ denote the M\"obius function and the divisor function, respectively. We recall that the Dirichlet convolution of two arithmetic  functions $f$ and $g$ is defined by  $f * g (n) = \sum_{d \mid n} f(d) g\left({n}/{d}\right)$  
for all positive integers $n$. 
The arithmetic functions $\1(n)$ and $\id (n)$ are defined by $\1(n) = 1$ 
and $\id (n)=n$ respectively. 
The function $\tau_k$ is the $k$-factors Piltz divisor function given by $\1*\1*\cdots \1$. In case of $k=2$, we write $\tau_2=\tau$. For an arbitrary arithmetic function $g$, we define the sum
\begin{align*}                                                        
f_{(g,k)}(n):= \sum_{d_{1}\cdots d_{k}=n}g\left(\gcd(d_{1},\ldots, d_{k})\right).
\end{align*}
In \cite{KNT}, Kr\"{a}tzel, Nowak and T\'{o}th gave asymptotic formulas for a class of arithmetic functions, which describe the value distribution of the greatest common divisor function. Typically, they are generated by a Dirichlet series whose analytic behavior is determined by the factor $\zeta^2(s)\zeta(2s-1),$ where $\zeta(s)$ is the Riemann zeta-function. In regards to the above formula, they proved that 
\begin{equation*}
 f_{(g,k)}(n) = \sum_{a^{k}b=n}\mu*g(a)\tau_{k}(b).
\end{equation*}
This identity was used to establish asymptotic formulas for $f_{(g,k)}(n)$ for specific choices of $g$ such as the identity function ${\rm id}$ and the sum of divisors function $\sigma={\rm id}*\1$. It was shown that
\begin{align*}                                                        
f_{(\id,k)}(n)=  \sum_{a^{k}b=n}\phi(a)\tau_{k}(b),
\quad        \quad                                                  
f_{(\sigma,k)}(n)=  \sum_{a^{k}b=n}a\tau_{k}(b),
\end{align*}
and that, for $\Re(s)>1$,  
\begin{align*}                                                        
\sum_{n=1}^{\infty} \frac{f_{(\id,k)}(n)}{n^s}=  \frac{\zeta^k(s)\zeta(ks-1)}{\zeta(ks)},
\quad        \quad                                                  
\sum_{n=1}^{\infty} \frac{f_{(\sigma,k)}(n)}{n^s}=  \zeta^k(s)\zeta(ks-1).
\end{align*}
In case of  $g=\delta :=\mu *\1$ (i.e., $\delta(1) = 1$ or $\delta(n) = 0$ else), the function 
$$
f_{(\delta,k)}(n)=\sum_{\substack{d_{1}\cdots d_{k}=n\\ \gcd(d_{1}\cdots d_{k})=1}}1
$$
has been considered in~\cite{I}.\\

Now, we define the sum
\begin{align*}                                                             
S_{g,k}(x) 
&:= \sum_{n\leq x} f_{(g,k)}(n) = \sum_{a^{k}b\leq x}\mu*g(a)\tau_{k}(b)
\end{align*} 
In two cases $g=\tau$ and $g=\mu$, it is easy to see that 
\begin{align}                                                              \label{tau-k}            
S_{\tau,k}(x) = \sum_{m\leq x^{\frac{1}{k}}}\sum_{\ell\leq x/m^k}\tau_{k}(\ell),
\end{align}
and that
\begin{align}                                                              \label{D-13}
S_{\mu,k}(x) 
&= \sum_{m\leq x^{\frac1k}}\mu*\mu(m)\sum_{n\leq \frac{x}{m^k}}\tau_{k}(n).
\end{align}
In this paper, we give asymptotic formulas for $S_{\tau,k}(x)$ and $S_{\mu,k}(x)$ for any integer $k\geq 3$. We have the following results. 
 \begin{thm}     
 \label{th11}
For any real number $x>5$, we have                                 
\begin{align}                                                                      \label{S-13}
& S_{\tau,3}(x)  
 =\frac{\zeta(3)}{2} x\log^{2}x + \zeta(3)\left(3\gamma -1 + 3 \frac{\zeta'(3)}{\zeta(3)}\right)x \log x    \\
& \ \ + \zeta(3) \left(3\gamma^{2} -3\gamma +3\gamma_{1}+1 
      +3(3\gamma -1)\frac{\zeta'(3)}{\zeta(3)}+\frac{9}{2}\frac{\zeta''(3)}{\zeta(3)}\right)x  
+ O\left(x^{\frac{43}{96}+\varepsilon}\right), \nonumber 
\end{align}
and                                                 
\begin{align}                                                              \label{S-14}          
& S_{\tau,4}(x)                                                         
 = \frac{\zeta(4)}{6} x\log^{3}x + \zeta(4)\left(2\gamma -\frac12 + 2\frac{\zeta'(4)}{\zeta(4)}\right) x\log^{2} x   \\
& +  \zeta(4) \left(6\gamma^{2} -4\gamma +4\gamma_{1}+1 
+8\left(2\gamma -\frac12\right)\frac{\zeta'(4)}{\zeta(4)}+ 8\frac{\zeta''(4)}{\zeta(4)}\right)x \log x \nonumber  \\
& +  \zeta(4)\left(12\gamma_{}\gamma_{1} + 4\gamma^{3} -6\gamma^{2}+4(\gamma -\gamma_{1}+\gamma_{2})-1 
+4(6\gamma^{2}-4\gamma + 4\gamma_{1}+1)\frac{\zeta'(4)}{\zeta(4)} \right)x   \nonumber  \\
&+16\zeta(4)\left(\left(2\gamma -\frac12\right) \frac{\zeta''(4)}{\zeta(4)}+ \frac{2}{3}\frac{\zeta^{(3)}(4)}{\zeta(4)}\right)x  
 + O\left({x^{\frac{1}{2}+\varepsilon}}\right),  \nonumber 
\end{align}
where $\gamma$ is the Euler constant, $\gamma_{1}$ and $\gamma_{2}$ 
are the Laurent-Stieltjes constants, (see Section~\ref{sec2} below for details). Here the function $\zeta^{(k)}(s)$ is the $k$-th derivative of the Riemann zeta-function $\zeta(s)$ with respect to $s$.  
\end{thm}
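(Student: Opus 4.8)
The plan is to reduce the double sum to the summatory function of the Piltz divisor function and then feed in the classical asymptotic for the latter. Setting $D_k(y):=\sum_{\ell\le y}\tau_k(\ell)$, the identity \eqref{tau-k} reads
$$
S_{\tau,k}(x)=\sum_{m\le x^{1/k}}D_k\!\left(\frac{x}{m^k}\right).
$$
I would then invoke the Dirichlet--Piltz estimate $D_k(y)=yP_{k-1}(\log y)+\Delta_k(y)$, where $P_{k-1}$ is the polynomial of degree $k-1$ determined by $yP_{k-1}(\log y)=\operatorname{Res}_{s=1}\bigl(\zeta^k(s)y^s/s\bigr)$ (its coefficients are assembled from the Laurent expansion of $\zeta(s)$ at $s=1$, hence involve $\gamma,\gamma_1,\gamma_2$, as recorded in Section~\ref{sec2}), and $\Delta_k$ is the error term. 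The two inputs I need are the bounds $\Delta_3(y)\ll y^{43/96+\epsilon}$ and $\Delta_4(y)\ll y^{1/2+\epsilon}$.

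For the error contribution, inserting these bounds gives
$$
\sum_{m\le x^{1/k}}\Delta_k\!\left(\frac{x}{m^k}\right)\ll x^{\theta_k+\epsilon}\sum_{m\le x^{1/k}}m^{-k\theta_k},
$$
with $\theta_3=43/96$ and $\theta_4=\tfrac12$. Since $k\theta_k>1$ in both cases ($3\cdot\tfrac{43}{96}=\tfrac{43}{32}$ and $4\cdot\tfrac12=2$), the $m$-sum converges and the whole expression is $O(x^{43/96+\epsilon})$ for $k=3$ and $O(x^{1/2+\epsilon})$ for $k=4$, exactly the error terms in \eqref{S-13} and \eqref{S-14}.

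The substance of the proof is the main term
$$
M_k(x)=x\sum_{m\le x^{1/k}}m^{-k}\,P_{k-1}(\log x-k\log m).
$$
First I would extend the summation to all $m\ge1$; because $P_{k-1}$ has degree $k-1$, the discarded tail is $\ll x^{1/k}(\log x)^{k-1}$, which is absorbed by the error terms above. Then I expand $P_{k-1}(\log x-k\log m)$ by the binomial theorem and apply
$$
\sum_{m=1}^{\infty}m^{-k}(\log m)^{i}=(-1)^{i}\zeta^{(i)}(k),
$$
so that $M_k(x)$ becomes $x$ times a polynomial in $\log x$ of degree $k-1$ whose coefficients are explicit combinations of $\zeta(k),\zeta'(k),\dots,\zeta^{(k-1)}(k)$ and the coefficients of $P_{k-1}$.

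The main obstacle is purely computational: identifying $P_{k-1}$ explicitly and then collecting the expansion without slips. For $k=3$ one has $P_2(u)=\tfrac12u^2+(3\gamma-1)u+(3\gamma^2-3\gamma+3\gamma_1+1)$, and the degree-two collapse immediately reproduces the three coefficients displayed in \eqref{S-13}. For $k=4$ the polynomial $P_3$ has degree three and brings in $\gamma_2$, so the binomial expansion generates the many cross terms --- the numerical factors $2,8,16$ and the various combinations of $\zeta^{(i)}(4)$ appearing in \eqref{S-14} --- which must be gathered with care. Everything past this bookkeeping is routine.
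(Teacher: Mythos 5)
Your proposal is correct and takes essentially the same route as the paper: reduce via \eqref{tau-k} to the Piltz summatory function, insert the divisor asymptotics with Kolesnik's bound $\Delta_3(y)\ll y^{43/96+\varepsilon}$ and $\Delta_4(y)\ll y^{1/2+\varepsilon}$, sum the error terms exactly as you do, and evaluate the main term through $\sum_{m\ge 1} m^{-k}(\log m)^i=(-1)^i\zeta^{(i)}(k)$. The only cosmetic difference is that you complete the $m$-sum to infinity and bound the tail by $O\left(x^{1/k}\log^{k-1}x\right)$, whereas the paper's Lemma~\ref{lem20} records the truncated sums with explicit secondary terms (obtained by the very same completion argument); in both treatments these contributions are absorbed into the stated error terms.
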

%%%%%%%%%%%%%%%%%%%%%%%%%%%%%%%%%%%%%%%%%%%%%%%%%%%%%%%%%%%%%%%%%%%%%%%%%%
%%%%%%%%%%%%%%%%%%%%%%%%%%%%%%%%%%%%%%%%%%%%%%%%%%%%%%%%%%%%%%%%%%%
 \begin{thm}      
 \label{th221}
Under the hypotheses of Theorem~\ref{th11} , we have
\begin{align}                                                           \label{S-23}  
&  S_{\mu,3}(x)                                                       
 =\frac{1}{2\zeta^{2}(3)} x \log^{2}x  
 + \frac{1}{\zeta^{2}(3)}\left(3\gamma -1 - 6 \frac{\zeta'(3)}{\zeta(3)}\right)x \log x    \\
& + \frac{1}{\zeta^{2}(3)} 
\left(3\gamma^{2} -3\gamma +3\gamma_{1}+1 -6(3\gamma -1) \frac{\zeta'(3)}{\zeta(3)} 
- 9\frac{\zeta''(3)}{\zeta(3)} + 27\left(\frac{\zeta'(3)}{\zeta(3)}\right)^{2}\right)x \nonumber \\
& \ \ \ + O\left(x^{\frac{43}{96}+\varepsilon}\right),    \nonumber  
\end{align}
and 
\begin{align} 
& S_{\mu,4}(x) = \frac{1}{6\zeta^{2}(4)} x\log^{3}x 
 + \frac{1}{\zeta^{2}(4)}\left(2\gamma -\frac12 - 4\frac{\zeta'(4)}{\zeta(4)}\right) x\log^{2} x             \label{S-24} \\
&+ \frac{16}{\zeta^{2}(4)}\left(\frac{6\gamma^{2} -4\gamma +4\gamma_{1}+1}{16} -\left(2\gamma -\frac12\right)\frac{\zeta'(4)}{\zeta(4)}
- \frac{\zeta''(4)}{\zeta(4)} +3\left(\frac{\zeta'(4)}{\zeta(4)}\right)^{2}\right) x \log x \nonumber \\
& +  \frac{1}{\zeta^2(4)}\left({12\gamma_{}\gamma_{1} + 4\gamma^{3} -6\gamma^{2}+4(\gamma -\gamma_{1}+\gamma_{2})-1} 
-8({6\gamma^{2}-4\gamma + 4\gamma_{1}+1}) \frac{\zeta'(4)}{\zeta(4)} \right)x  \nonumber \\
&+\frac{32}{\zeta^2(4)}\left(2\gamma -\frac12\right)\left(3\left(\frac{\zeta'(4)}{\zeta'(4)}\right)^2 - \frac{\zeta''(4)}{\zeta(4)}\right)x  \nonumber \\
&-\frac{64}{3\zeta^{2}(4)}\left(12\left(\frac{\zeta'(4)}{\zeta(4)}\right)^{3} -9 \frac{\zeta'(4)}{\zeta(4)}\cdot \frac{\zeta''(4)}{\zeta^{}(4)}
+  \frac{\zeta^{(3)}(4)}{\zeta(4)}\right) x
 + O\left({x^{\frac{1}{2}+\varepsilon}}\right). \nonumber 
\end{align}
\end{thm}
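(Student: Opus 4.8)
The plan is to mirror the derivation of Theorem~\ref{th11}, starting from the representation~\eqref{D-13}. Write $T_k(y):=\sum_{n\le y}\tau_k(n)$ for the summatory Piltz function, so that~\eqref{D-13} reads $S_{\mu,k}(x)=\sum_{m\le x^{1/k}}\mu*\mu(m)\,T_k(x/m^k)$. The only structural difference from the $\tau$ case~\eqref{tau-k} is that the constant coefficient $1$ (whose Dirichlet series is $\zeta(s)$, since $\mu*\tau=\1$) is replaced by $\mu*\mu(m)$, whose Dirichlet series is $\zeta^{-2}(s)$; this is precisely the source of the factors $1/\zeta^2(k)$ and the logarithmic derivatives $\zeta'(k)/\zeta(k)$, $(\zeta'(k)/\zeta(k))^2,\dots$ appearing in~\eqref{S-23} and~\eqref{S-24}, in contrast to the bare values $\zeta(k),\zeta'(k),\zeta''(k),\dots$ of Theorem~\ref{th11}.

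First I would insert the classical asymptotic $T_k(y)=y\,P_{k-1}(\log y)+\Delta_k(y)$, where $P_{k-1}$ is the degree-$(k-1)$ polynomial determined by the residue of $\zeta^k(s)y^{s}/s$ at $s=1$ (leading coefficient $1/(k-1)!$), and $\Delta_k$ is the Piltz error term, for which I use $\Delta_3(y)\ll y^{43/96+\epsilon}$ and $\Delta_4(y)\ll y^{1/2+\epsilon}$. Substituting and using $\log(x/m^k)=\log x-k\log m$, the main term becomes
\[
x\sum_{m\le x^{1/k}}\frac{\mu*\mu(m)}{m^k}\,P_{k-1}(\log x-k\log m),
\]
which, after expanding the polynomial, reduces to evaluating the finite sums $\sum_{m\le x^{1/k}}\mu*\mu(m)(\log m)^j m^{-k}$ for $0\le j\le k-1$.

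Next I would complete each of these sums to an infinite series. Since $\sum_{m\ge1}\mu*\mu(m)m^{-s}=\zeta^{-2}(s)$ for $\Re(s)>1$, differentiating $j$ times in $s$ gives
\[
\sum_{m\ge1}\frac{\mu*\mu(m)(\log m)^j}{m^s}=(-1)^j\frac{d^j}{ds^j}\,\zeta^{-2}(s),
\]
and I would evaluate at $s=k$, rewriting these derivatives through the logarithmic derivatives $\zeta'/\zeta$, $\zeta''/\zeta$, $\zeta^{(3)}/\zeta$ by repeated differentiation of the relation $(\zeta^{-2})'=-2\,\zeta^{-2}\,(\zeta'/\zeta)$. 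The tail $\sum_{m>x^{1/k}}$ is controlled by $|\mu*\mu(m)|\ll_\epsilon m^{\epsilon}$, giving $\sum_{m>x^{1/k}}|\mu*\mu(m)|(\log m)^j m^{-k}\ll x^{(1-k)/k+\epsilon}$; multiplied by the outer factor $x$ this contributes $\ll x^{1/k+\epsilon}$, which is absorbed into $O(x^{43/96+\epsilon})$ for $k=3$ and $O(x^{1/2+\epsilon})$ for $k=4$. Collecting the coefficients of $(\log x)^{k-1},\dots,(\log x)^0$ then yields the stated main terms.

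Finally, for the error term I would bound $\sum_{m\le x^{1/k}}|\mu*\mu(m)|\,\Delta_k(x/m^k)$. Using the pointwise bounds on $\Delta_k$, this is $\ll x^{\theta_k+\epsilon}\sum_{m\le x^{1/k}}|\mu*\mu(m)|m^{-k\theta_k}$ with $\theta_3=43/96$ and $\theta_4=1/2$; since $k\theta_k>1$ in both cases ($3\cdot\tfrac{43}{96}=\tfrac{43}{32}>1$ and $4\cdot\tfrac12=2>1$), the residual sum converges and the total error matches Theorem~\ref{th11}. The main obstacle is not conceptual but computational bookkeeping: carefully expanding $P_{k-1}(\log x-k\log m)$, computing the successive derivatives of $\zeta^{-2}$ at $s=k$ (up to the third for $k=4$), and assembling the resulting constants into the precise coefficients displayed in~\eqref{S-23} and~\eqref{S-24} — the case $k=4$ being the most laborious because it involves the cubic logarithmic-derivative term $12(\zeta'/\zeta)^3-9(\zeta'/\zeta)(\zeta''/\zeta)+\zeta^{(3)}/\zeta$.
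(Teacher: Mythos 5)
Your proposal is correct and takes essentially the same route as the paper: substitute the Piltz asymptotics $T_k(y)=yP_{k-1}(\log y)+\Delta_k(y)$ into \eqref{D-13}, complete the sums $\sum_{m\le x^{1/k}}\mu*\mu(m)(\log m)^j m^{-k}$ to the derivatives of $\zeta^{-2}(s)$ at $s=k$ expressed through logarithmic derivatives (this is exactly the paper's Lemma~\ref{lem30}), and bound $\sum_{m}|\mu*\mu(m)|\Delta_k(x/m^k)$ using $k\theta_k>1$. The only (harmless) difference is in the tail estimates: the paper invokes Ivi\'c's bound $M(x)=O(x\varepsilon(x))$ with partial summation, while you use the trivial bound $|\mu*\mu(m)|\ll_\varepsilon m^{\varepsilon}$, which still yields a contribution $\ll x^{1/k+\varepsilon}$ that is absorbed in the stated error terms for $k=3,4$.
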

%%%%%%%%%%%%%%%%%%%%%%%%%%%%%%%%%%%%%%%%%%%%%%%%%%%%%%%%%%%%%%%%%%%%%%%%%%%%%%%%%%%%%%%%%%%%%%%%%%%%%%%%%%%%%%%%%%%%%%%%%%%%%%%%%%%%%%%%%%%%%%
The following theorem states asymptotic formulas of $S_{\tau,k}(x)$ and $S_{\mu,k}(x)$ for any integer $k\geq 5.$
 \begin{thm}     
 \label{th12}
 Let ${P}_{g,k}(u)$ be a polynomial in $u$ of degree $k-1$ depending on $g$. For $g=\tau$, $g=\mu$, and $k\geq 5$, we have    
\begin{align}                                                              \label{S-15}
S_{g,k}(x) = x {P}_{g,k}(\log x) + {E}_{g,k}(x)
\end{align}
where
\begin{align*}
&{E}_{g,k}(x)=
 O\left(x^{\frac{3k-4}{4k}+\varepsilon}\right) \quad  (5\leq k \leq 8),\\
 &{E}_{g,9}(x)=O\left(x^{\frac{35}{54}+\varepsilon}\right), \\
&{E}_{g,10}(x)=O\left(x^{\frac{41}{60}+\varepsilon}\right), \\
&{E}_{g,11}(x)=O\left(x^{\frac{7}{10}+\varepsilon}\right),\\
&{E}_{g,k}(x)=O\left(x^{\frac{k-2}{k+2}+\varepsilon}\right)  \quad \quad (12 \leq k \leq 25),\\
&{E}_{g,k}(x)=O\left(x^{\frac{k-1}{k+4}+\varepsilon}\right)  \quad \quad (26 \leq k \leq 50),\\
&{E}_{g,k}(x)=O\left(x^{\frac{31k-98}{32k}+\varepsilon}\right) \quad (51 \leq k \leq 57),\\
&{E}_{g,k}(x)=O\left(x^{\frac{7k-34}{7k}+\varepsilon}\right)  \ \quad (k\geq 58),
\end{align*}
for any small number $\varepsilon>0$. 
\end{thm}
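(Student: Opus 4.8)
The plan is to reduce both sums to the classical summatory function of the Piltz divisor function and then to feed in the sharpest available estimate for its error term. Write $D_{k}(y):=\sum_{\ell\le y}\tau_{k}(\ell)$ and recall the standard asymptotic $D_{k}(y)=yQ_{k-1}(\log y)+\Delta_{k}(y)$, where $Q_{k-1}$ is the explicit polynomial of degree $k-1$ arising as the residue of $\zeta^{k}(s)y^{s}/s$ at $s=1$, and $\Delta_{k}(y)\ll y^{\theta_{k}+\epsilon}$ denotes the error term in the Piltz divisor problem of order $k$. Substituting this into the identities \eqref{tau-k} and \eqref{D-13} splits each of $S_{\tau,k}(x)$ and $S_{\mu,k}(x)$ into a main part coming from $yQ_{k-1}(\log y)$ and an error part coming from $\Delta_{k}$.

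First I would treat the main part. For $g=\tau$ it is $x\sum_{m\le x^{1/k}}m^{-k}Q_{k-1}(\log x-k\log m)$, and for $g=\mu$ it is the same expression with the weight $(\mu*\mu)(m)$ inserted. I would complete the sum over $m$ to all of $\mathbb{N}$; the tail is $\ll x^{1/k}(\log x)^{k-1}$, which is negligible against the error term claimed in \eqref{S-15} because $\theta_{k}>1/k$ throughout the stated ranges. The resulting absolutely convergent series is precisely the residue at $s=1$ of $\zeta^{k}(s)\zeta(ks)\,x^{s}/s$ (respectively $\zeta^{k}(s)\zeta^{-2}(ks)\,x^{s}/s$), since $\sum_{m}m^{-ks}=\zeta(ks)$ and $\sum_{m}(\mu*\mu)(m)m^{-ks}=\zeta^{-2}(ks)$. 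As $\zeta(ks)$ and $\zeta^{-2}(ks)$ are holomorphic and nonzero at $s=1$, the pole of order $k$ contributed by $\zeta^{k}(s)$ produces a polynomial ${P}_{g,k}(\log x)$ of degree exactly $k-1$, with leading coefficient $\zeta(k)/(k-1)!$ for $g=\tau$ and $1/((k-1)!\,\zeta^{2}(k))$ for $g=\mu$; this is the uniform analogue of the explicit computations carried out for $k=3,4$ in Theorems~\ref{th11} and~\ref{th221}.

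Next I would bound the error part $E_{g,k}(x)=\sum_{m\le x^{1/k}}c_{g}(m)\Delta_{k}(x/m^{k})$, where $c_{\tau}=\1$ and $c_{\mu}=\mu*\mu$. Since $|c_{g}(m)|\ll m^{\epsilon}$, inserting $\Delta_{k}(y)\ll y^{\theta_{k}+\epsilon}$ gives
\begin{align*}
E_{g,k}(x)\ll x^{\theta_{k}+\epsilon}\sum_{m\le x^{1/k}}m^{\epsilon-k\theta_{k}}.
\end{align*}
Because $k\theta_{k}>1$ for every $k\ge5$ and every exponent $\theta_{k}$ listed, this sum converges and the whole contribution is dominated by the single term $m=1$, yielding $E_{g,k}(x)\ll x^{\theta_{k}+\epsilon}$. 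Thus the error in \eqref{S-15} is governed entirely by the sharpest known pointwise bound for $\Delta_{k}$.

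The substantive step is therefore the input $\Delta_{k}(y)\ll y^{\theta_{k}+\epsilon}$ itself: the piecewise exponents in the statement are exactly the best known bounds for the Piltz divisor problem in each range of $k$, assembled from moment estimates and exponent-pair and large-value techniques (for small $k$ one uses van der Corput type estimates, while for large $k$ higher-moment and large-value bounds become optimal). Verifying which estimate is sharpest in each window, and checking the harmless inequality $k\theta_{k}>1$ so that the sum over $m$ is always controlled by $m=1$, is where the bookkeeping lies; I expect no new analytic difficulty beyond invoking these divisor bounds, so the main obstacle is really the careful assembly and comparison of the imported estimates for $\Delta_{k}$ rather than any step internal to the reduction.
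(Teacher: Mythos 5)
Your proposal is correct and follows essentially the same route as the paper: substitute the Piltz asymptotic $\sum_{\ell\le y}\tau_k(\ell)=yQ_{k-1}(\log y)+\Delta_k(y)$ into \eqref{tau-k} and \eqref{D-13}, recover the main polynomial by completing the sum over $m\le x^{1/k}$ with a negligible tail (the paper does this term-by-term via Lemmas~\ref{lem20} and~\ref{lem30}, while you package the completed series as the residue of $\zeta^{k}(s)\zeta(ks)x^{s}/s$, resp.\ $\zeta^{k}(s)\zeta^{-2}(ks)x^{s}/s$, at $s=1$ -- the same polynomial), and bound $\sum_{m\le x^{1/k}}c_g(m)\Delta_k(x/m^k)\ll x^{\alpha_k+\varepsilon}$ exactly as the paper does, using the Lemma~\ref{lem15} exponents and the fact that $k\alpha_k>1$ makes the sum over $m$ converge. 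There is no gap; the only difference is the (cosmetic, and arguably cleaner) residue formulation of the main term.
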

%%%%%%%%%%%%%%%%%%%%%%%%%%%%%%%%%%%%%%%%%%%%%%%%%%%%%%%%%%%%%%%%%%%%%%%%%%%%%%%%%%%%%%%%%%%%%%%%%%%%%%%%%%%%%%%%%%%%%%%%%%%%%%%%%%%%%%%%%%%%%%%%%%%%%%
\section{Auxiliary results}
\label{sec2}
Before going into the proof of Theorems, we recall that the Laurent expansion of the Riemann zeta-function at its pole  $s=1$ is given by  
$$
\zeta(s)= \frac{1}{s-1}+ \sum_{k=0}^{\infty}\gamma_{k}(s-1)^{k}.
$$ 
Here the constants $\gamma_{k}$ are often called the Laurent-Stieltjes constants and it is known that 
\begin{equation*}
\gamma_n= \frac{(-1)^n}{n!}\lim\limits_{M\rightarrow \infty}\left(\sum\limits_{m=1}^{M}\frac{(\log m)^n }{m}-\frac{(\log M)^{n+1}}{(n+1)}\right),
\end{equation*}
for all $n\geq 0$ with $\gamma_0=\gamma=0,577\cdots$ being the Euler–Mascheroni constant.\\

We define  the error term $\Delta_{k}(x)$ of the Piltz divisor problem    by   
\begin{align}                                               
\label{piltz}
\Delta_{k}(x) := \sum_{n\leq x}\tau_{k}(n) - x~P_{k-1}(\log x),
\end{align} 
where $P_{k-1}(t)$ is a polynomial of degree $k-1$  in $t$.  
Notice that the coefficients of $P_{k-1}$ may be evaluated by using 
\begin{equation}
\label{eq2}
P_{k-1}(\log x) = \underset{s=1}{\rm Res}~\zeta^{k}(s)\frac{x^{s-1}}{s}.   
\end{equation}
From Eqs.~\eqref{piltz} and \eqref{eq2}, one can calculate explicity the coefficients of $P_{k-1}$ as functions of the Laurent-Stieltjes constants. For more details, see~\cite[Chapter 13]{I}.\\
%%%%%%%%%%%%%%%%%%%%%%%%%%%%%%%%%%%%%%%%%%%%%%%%%%%%%%%%%%%%%%

In order to prove our main results, it will be necessary to give some lemmas.
\begin{lem} \label{lem10}
We have 
\begin{align}                                                                            \label{d3}
\sum_{n\leq x}\tau_{3}(n) = \left(b_{1}\log^{2}x +b_{2}\log x + b_{3}\right) x  + \Delta_{3}(x),
\end{align}  
where 
$
\Delta_{3}(x) \ll x^{\frac{43}{96}+\varepsilon},  
$ 
and 
$$b_{1}=\frac12,\quad  b_{2}=3\gamma -1,  \quad
b_{3} = 3\gamma^{2}-3\gamma + 3\gamma_{1}+1. $$ 
Furthermore, we have      
\begin{align}                                                              \label{d4}
\sum_{n\leq x}\tau_{4}(n) =\left(c_{1}\log^{3}x + c_{2}\log^{2} x + c_{3} \log x   + c_{4}\right) x + \Delta_{4}(x)
\end{align}   
where 
$  
\Delta_{4}(x) \ll x^{\frac{1}{2}+\varepsilon},  
$   
and 
\begin{align*}
&c_{1}=\frac{1}{6},\quad  c_{2}=2\gamma -\frac{1}{2}, \quad c_{3}=6\gamma^{2}-4\gamma+4\gamma_{1}+1,\\
& c_{4}=12\gamma\gamma_{1}+4\gamma^{3}-6\gamma^{2}+ 4(\gamma - \gamma_{1}+\gamma_{2})-1.
\end{align*}
\end{lem}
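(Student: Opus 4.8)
The plan is to recognise Lemma~\ref{lem10} as the explicit, coefficient-by-coefficient version of the classical Piltz divisor asymptotic \eqref{piltz} in the two cases $k=3$ and $k=4$. The qualitative statement --- that $\sum_{n\le x}\tau_{k}(n)$ equals $x$ times a polynomial of degree $k-1$ in $\log x$ plus an error term $\Delta_{k}(x)$ --- is already recorded in \eqref{piltz}, and the quoted error bounds are imported from the literature: the estimate $\Delta_{3}(x)\ll x^{43/96+\varepsilon}$ is Kolesnik's bound for the three-dimensional divisor problem, while $\Delta_{4}(x)\ll x^{1/2+\varepsilon}$ follows from the fourth-moment estimate for $\zeta$ on the critical line (equivalently, from the classical bound $\Delta_{k}(x)\ll x^{(k-1)/(k+2)+\varepsilon}$ evaluated at $k=4$, giving exponent $3/6=\tfrac12$). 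Thus the only thing left to prove is the \emph{explicit} evaluation of the coefficients $b_{1},b_{2},b_{3}$ and $c_{1},c_{2},c_{3},c_{4}$, and for this I would compute the residue in \eqref{eq2} directly.

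First I would pass to the local variable $w=s-1$ and use the Laurent expansion from Section~\ref{sec2}, which in the paper's normalisation reads
\begin{equation*}
(s-1)\,\zeta(s) \;=\; A(w) \;:=\; 1 + \gamma w + \gamma_{1} w^{2} + \gamma_{2} w^{3} + \cdots ,
\end{equation*}
so that $\zeta^{k}(s) = w^{-k} A(w)^{k}$. Writing also $x^{s-1}=x^{w}=\sum_{m\ge 0}\frac{(\log x)^{m}}{m!}\,w^{m}$ and $\frac1s=\frac1{1+w}=\sum_{m\ge 0}(-1)^{m}w^{m}$, formula \eqref{eq2} identifies $P_{k-1}(\log x)$ with the residue at $w=0$, i.e. with the coefficient of $w^{-1}$ in $w^{-k}A(w)^{k}\,x^{w}(1+w)^{-1}$. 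Equivalently,
\begin{equation*}
P_{k-1}(\log x) \;=\; \big[\,w^{k-1}\,\big]\; A(w)^{k}\, x^{w}\,(1+w)^{-1},
\end{equation*}
the coefficient of $w^{k-1}$ in the indicated product of power series.

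The remaining work is then a finite power-series manipulation. For $k=3$ I would expand $A(w)^{3} = 1 + 3\gamma w + (3\gamma^{2}+3\gamma_{1})w^{2} + \cdots$, multiply by $x^{w}$ and by $(1+w)^{-1}$, and read off the coefficient of $w^{2}$; collecting by powers of $\log x$ yields a quadratic whose leading coefficient is $\tfrac12$ (coming solely from $\tfrac{(\log x)^{2}}{2!}$ against $A(0)^{3}=1$) and whose remaining coefficients reproduce $b_{2}=3\gamma-1$ and $b_{3}=3\gamma^{2}-3\gamma+3\gamma_{1}+1$. For $k=4$ the same procedure applied to $A(w)^{4}$ and the coefficient of $w^{3}$ gives the cubic with $c_{1}=\tfrac16$ and the stated $c_{2},c_{3},c_{4}$. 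The main obstacle is purely one of bookkeeping: the $k=4$ evaluation of $c_{4}=12\gamma\gamma_{1}+4\gamma^{3}-6\gamma^{2}+4(\gamma-\gamma_{1}+\gamma_{2})-1$ requires carefully tracking every contribution in the triple product $A(w)^{4}\,x^{w}\,(1+w)^{-1}$ --- in particular the mixed term $\gamma\gamma_{1}$ and the alternating contributions of $(1+w)^{-1}$ --- without sign or multinomial-coefficient errors. No new analytic input is needed beyond the cited estimates for $\Delta_{3}$ and $\Delta_{4}$.
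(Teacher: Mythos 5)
Your proposal is correct and follows essentially the same route as the paper: the paper's proof is a bare citation of Kolesnik for $\Delta_{3}(x)\ll x^{43/96+\varepsilon}$ and of Ivi\'{c} [Theorem 13.2] for $\Delta_{4}(x)\ll x^{1/2+\varepsilon}$ together with the coefficients, which is exactly your treatment of the error terms. Your additional residue computation of $b_{1},b_{2},b_{3}$ and $c_{1},\dots,c_{4}$ via the coefficient of $w^{k-1}$ in $A(w)^{k}x^{w}(1+w)^{-1}$ is precisely the evaluation the paper delegates to Eq.~\eqref{eq2} and \cite[Chapter 13]{I}, and it checks out (e.g. the constant term for $k=4$ collects to $12\gamma\gamma_{1}+4\gamma^{3}-6\gamma^{2}+4(\gamma-\gamma_{1}+\gamma_{2})-1$ as claimed).
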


\begin{proof} 
The proof of Eqs.~\eqref{d3} and \eqref{d4} can be found in \cite{K} and \cite[Theorems 13.2]{I}, respectively.  
\end{proof}
%%%%%%%%%%%%%%%%%%%%%%%%%%%%%%%%%%%%%%%%%%%%%%%%%%%%%%%%%%%%%%%%%%%%%%%%%%%%%%%%%%%%%%%%%%%%%%%%%%%%%%%%%%%%%%%%%%%%%%%%%%%%%%%%%%%%%%%%%%%%%%%%%%
\begin{lem} \label{lem15}
Let  $\alpha_{k}$ be the infimum of numbers $a_{k}$ such that    
$
\Delta_{k}(x)\ll \left(x^{a_{k}+\varepsilon}\right)
$ for any small $\varepsilon > 0$. Then

\begin{align*}
&\alpha_{k} \leq \frac{3k-4}{4k} \quad\quad   (5\leq k \leq 8),  \\
&\alpha_{9} \leq\frac{35}{54},    \\
&\alpha_{10} \leq\frac{41}{60}, \\  
&\alpha_{11} \leq\frac{7}{10},  \\ 
&\alpha_{k} \leq\frac{k-2}{k+2}   \quad \quad \quad  (12 \leq k \leq 25), \\ 
&\alpha_{k} \leq\frac{k-1}{k+4}   \quad \quad \quad (26 \leq k \leq 50),\\ 
&\alpha_{k} \leq\frac{31k-98}{32k} \quad \  (51 \leq k \leq 57),\\    
&\alpha_{k} \leq \frac{7k-34}{7k}   \quad  \quad  (k\geq 58).
\end{align*}
\end{lem}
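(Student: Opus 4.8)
The plan is to treat Lemma~\ref{lem15} as a catalogue of known upper bounds for the exponent in the Piltz divisor problem, to reduce each one to the growth of $\zeta^{k}$ on the critical line by the standard analytic method, and then to match each stated range with its source in the literature. The common starting point is Perron's formula: for $c>1$ and a truncation height $T$,
$$
\sum_{n\le x}\tau_{k}(n)=\frac{1}{2\pi i}\int_{c-iT}^{c+iT}\zeta^{k}(s)\frac{x^{s}}{s}\,ds+O\left(\frac{x^{c+\varepsilon}}{T}\right).
$$
Shifting the line of integration to $\Re s=\tfrac12$ collects the residue at $s=1$, which is precisely the main term $x\,P_{k-1}(\log x)$ described in~\eqref{piltz}--\eqref{eq2}; what survives is the integral of $\zeta^{k}(s)x^{s}/s$ along $\Re s=\tfrac12$ together with the horizontal connecting segments. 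Consequently, after optimizing $T$ against $x$, the order of $\Delta_{k}(x)$ is controlled entirely by the size of $\zeta^{k}$ on and near the critical line.

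For each range of $k$ I would then insert the strongest applicable input and balance the parameters. Three families of estimates enter: pointwise subconvexity bounds $\zeta(\tfrac12+it)\ll t^{\theta+\varepsilon}$; power--moment bounds $\int_{0}^{T}|\zeta(\tfrac12+it)|^{A}\,dt\ll T^{1+\varepsilon}$ for admissible $A$; and the theory of exponent pairs, which turns estimates for the exponential sums implicit in $\sum_{n\le x}\tau_{k}(n)$ into bounds for $\Delta_{k}$. The value $\alpha_{3}\le 43/96$ is Kolesnik's estimate already used in Lemma~\ref{lem10}; the formula $\frac{3k-4}{4k}$ for $5\le k\le 8$, the individual values for $k=9,10,11$, and the expressions $\frac{k-2}{k+2}$, $\frac{k-1}{k+4}$, $\frac{31k-98}{32k}$, and $\frac{7k-34}{7k}$ in the remaining ranges each arise as the sharpest such combination among the applicable methods in its regime, and are recorded in the standard references (notably Ivi\'{c}, \emph{The Riemann Zeta-Function}, Ch.~13, and the moment estimates of Heath-Brown cited therein). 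I would therefore establish the lemma by identifying, for each inequality, the method and source that yield exactly the stated exponent.

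The main obstacle is not the contour reduction, which is routine, but securing and correctly deploying the deep input estimates, together with tracking the break-points between ranges. For the moderate ranges $12\le k\le 50$ the sharp exponent depends on the best available high power moments of $\zeta$ on the critical line combined with a careful optimization over exponent pairs, while each of the bounds $\frac{k-2}{k+2}$, $\frac{k-1}{k+4}$, $\frac{31k-98}{32k}$, $\frac{7k-34}{7k}$ is valid only on a restricted interval of $k$ determined by when its underlying method first applies and when it is overtaken by another; indeed a formula may be numerically smaller than the listed one at a boundary value yet inadmissible there because its method does not apply. Verifying that the listed formula really is the sharpest \emph{applicable} bound on precisely the interval claimed---and that the transitions at the special values $k=9,10,11$ and at the range boundaries $k=25/26$, $50/51$, and $57/58$ occur where stated---is the delicate bookkeeping that constitutes the substance of the argument.
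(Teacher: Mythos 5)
Your proposal is essentially the paper's own proof: the paper disposes of this lemma with a single citation to Ivi\'{c}, \emph{The Riemann Zeta-Function}, Theorem 13.2, which is exactly the source you identify, and the Perron-formula/moment-bound machinery you sketch is the standard background behind that theorem rather than new content to be supplied. Since both you and the authors ultimately rest the lemma on quoting these known exponents from Ivi\'{c}'s Chapter 13, the approaches coincide.
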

\begin{proof} 
The proof of this lemma can be found in \cite[Theorem 13.2]{I}.  
\end{proof}
%%%%%%%%%%%%%%%%%%%%%%%%%%%%%%%%%%%%%%%%%%%%%%%%%%%%%%%%%%%%%%%%%%%%%%%%%%%%%%%%%%%%%%%%%%%%%%%%%%%%%%%%%%%%%%%%%%%%%%%%%%%%%%%%%%%%%%%%%%%%%%%%%%%%%%
\begin{lem}  \label{lem20}
For any  real number $x>5$ and any integer $k\geq 3$, we have 
\begin{align}  
\label{For-10}
\sum_{n\leq {x^{\frac1k}}}\frac{1}{n^k} 
&= \zeta(k) + \frac{1}{1-k}x^{\frac{1-k}{k}} +O\left(x^{-1}\right). 
\end{align}
Furthermore, we have  
\begin{align}
 \label{For-111}
&\sum_{n\leq {x^{\frac1k}}}\frac{\log^{}n}{n^k} 
= - \zeta^{'}(k) +\frac{1}{k(1-k)}x^{\frac{1-k}{k}}\log x -\frac{1}{(1-k)^2}x^{\frac{1-k}{k}} 
+  O\left(x^{-1}\log^{}x\right),                                            \\
&\sum_{n\leq {x^{\frac1k}}}\frac{\log^{2}n}{n^k} 
=  \zeta^{''}(k)  +\frac{1}{(1-k)k^2}x^{\frac{1-k}{k}}\log^{2}x   - \frac{2}{k(1-k)^2}x^{\frac{1-k}{k}}\log x   \nonumber \\
&\qquad  \qquad  \qquad \qquad    +  \frac{2}{(1-k)^3}x^{\frac{1-k}{k}} +O\left(x^{-1}\log^{2} x \right),                 \label{For-112} \\
&\sum_{n\leq {x^{\frac1k}}}\frac{\log^{3}n}{n^k} = -\zeta^{(3)}(k) 
 +\frac{1}{(1-k)k^3}x^{\frac{1-k}{k}}\log^{3}x   - \frac{3}{k^2 (1-k)^2}x^{\frac{1-k}{k}}\log^{2} x   \nonumber \\
& \qquad  \qquad + \frac{6}{k(1-k)^3}x^{\frac{1-k}{k}}\log x   +  \frac{6}{(1-k)^4}x^{\frac{1-k}{k}} + O\left(x^{-1}\log^{3} x \right).    \label{For-113}  
\end{align}
\end{lem}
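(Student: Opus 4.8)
The plan is to prove all four formulas by a single mechanism: relate each finite sum to a derivative of $\zeta$ at $s=k$ and then expand the tail. Since a Dirichlet series may be differentiated termwise throughout its half-plane of absolute convergence and $k\ge 3>1$, we have $\zeta^{(j)}(k)=(-1)^{j}\sum_{n=1}^{\infty}(\log^{j}n)/n^{k}$ for $j=0,1,2,3$. Writing $y:=x^{1/k}$, this gives
\begin{equation*}
\sum_{n\le y}\frac{\log^{j} n}{n^{k}}=(-1)^{j}\zeta^{(j)}(k)-T_{j}(y),\qquad T_{j}(y):=\sum_{n>y}\frac{\log^{j} n}{n^{k}},
\end{equation*}
so it suffices to expand the tail $T_{j}(y)$ with an error of size $O(y^{-k}\log^{j}y)=O(x^{-1}\log^{j}x)$, which is exactly the error claimed in each case.

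First I would apply Euler--Maclaurin summation (equivalently, Abel summation against $\sum_{n\le t}1=t+O(1)$) to the eventually decreasing function $f(t)=(\log^{j}t)/t^{k}$ on $[y,\infty)$, obtaining
\begin{equation*}
T_{j}(y)=\int_{y}^{\infty}\frac{\log^{j}t}{t^{k}}\,dt-\tfrac12 f(y)+\int_{y}^{\infty}\Bigl(\{t\}-\tfrac12\Bigr)f'(t)\,dt.
\end{equation*}
Here $f(y)\ll y^{-k}\log^{j}y$, and since $f'(t)\ll t^{-k-1}\log^{j}t$ one gets $\int_{y}^{\infty}|f'(t)|\,dt\ll y^{-k}\log^{j}y$; because $k\ge 3$, both correction terms are absorbed into the admissible error $O(x^{-1}\log^{j}x)$. (Replacing the integer cut-off $\lfloor y\rfloor$ by $y$ costs only $O(f(y))$, again negligible.) Thus only the integral $I_{j}(y):=\int_{y}^{\infty}(\log^{j}t)/t^{k}\,dt$ contributes to the main terms.

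Next I would evaluate $I_{j}(y)$ by integration by parts with $u=\log^{j}t$ and $dv=t^{-k}\,dt$. The contribution at $t=\infty$ vanishes because $k>1$, while the boundary at $t=y$ produces $y^{1-k}\log^{j}y/(k-1)$, yielding the recursion
\begin{equation*}
I_{j}(y)=\frac{y^{1-k}\log^{j}y}{k-1}+\frac{j}{k-1}\,I_{j-1}(y),\qquad I_{0}(y)=\frac{y^{1-k}}{k-1}.
\end{equation*}
Unwinding it for $j=1,2,3$ expresses $I_{j}(y)$ as an explicit combination of the terms $y^{1-k}\log^{i}y$ with $0\le i\le j$ and coefficients rational in $k-1$; for instance the $\log$-free term equals $j!\,y^{1-k}/(k-1)^{j+1}$.

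Finally I would substitute $y=x^{1/k}$, using $y^{1-k}=x^{(1-k)/k}$ and $\log y=\tfrac1k\log x$, so that each factor $\log y$ contributes a $\tfrac1k$ and $(k-1)$ may be rewritten as $-(1-k)$ wherever convenient. Collecting the powers of $\log x$ in $(-1)^{j}\zeta^{(j)}(k)-I_{j}(x^{1/k})$ then reproduces \eqref{For-10} and \eqref{For-111}--\eqref{For-113}. The only genuinely delicate step is the error bookkeeping in the Euler--Maclaurin stage: one must verify that the boundary value and the remainder integral really are of order $x^{-1}\log^{j}x$ and do not leak into the main terms, which is precisely where the hypothesis $k\ge 3$ is used. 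The integration-by-parts recursion and the final substitution are then entirely routine, though one should track the signs carefully when converting between powers of $k-1$ and $1-k$.
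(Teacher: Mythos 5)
Your mechanism is sound and is essentially the paper's own argument: both write
$\sum_{n\le x^{1/k}}(\log^{j}n)/n^{k}=(-1)^{j}\zeta^{(j)}(k)-T_{j}$ and expand the tail $T_{j}$ by an elementary device with error $O(x^{-1}\log^{j}x)$. The only real difference is how the tail is expanded: the paper partial-sums against the asymptotics of $\sum_{n\le z}\log^{r}n$ (quoting Nathanson for $r=2$ and computing $r=3$ by hand), whereas you compare $T_{j}$ directly with $I_{j}(y)=\int_{y}^{\infty}(\log^{j}t)\,t^{-k}\,dt$ and evaluate $I_{j}$ by the parts recursion. Your variant is a bit more self-contained, and your error bookkeeping (both corrections are $O(y^{-k}\log^{j}y)=O(x^{-1}\log^{j}x)$) is correct.

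However, your final step overclaims, and precisely at the sign-tracking you yourself call delicate. Unwinding your recursion for $j=3$, the log-free term of $I_{3}(y)$ is $3!\,y^{1-k}/(k-1)^{4}$, so the log-free term of the finite sum $(-1)^{3}\zeta^{(3)}(k)-I_{3}(y)$ is
\begin{equation*}
-\frac{6\,y^{1-k}}{(k-1)^{4}}=-\frac{6}{(1-k)^{4}}\,x^{\frac{1-k}{k}},
\end{equation*}
which has the \emph{opposite} sign to the term $+\frac{6}{(1-k)^{4}}x^{\frac{1-k}{k}}$ printed in \eqref{For-113} (the even power makes the printed coefficient positive). For \eqref{For-10}--\eqref{For-112} the analogous coefficients do agree, as you can check the same way; only the last main term of \eqref{For-113} is affected, and the discrepancy, of size $\asymp x^{\frac{1-k}{k}}$, cannot be absorbed by the error term. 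So your computation does not "reproduce" \eqref{For-113}: it proves the formula with $-\frac{6}{(1-k)^{4}}$ in place of $+\frac{6}{(1-k)^{4}}$. This is not a defect of your method --- the paper's own proof gives the same output, since substituting the tail expansion \eqref{eq6} into \eqref{K} negates every term of \eqref{eq6} and yields $-\frac{6}{(1-s)^{4}}z^{1-s}$; the printed statement therefore carries a sign error in that term. You should say this explicitly (prove the corrected formula and flag the erratum) rather than assert agreement with the lemma as printed.
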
 
\begin{proof}
Eq.~\eqref{For-10} is given in \cite[Eq.~(14.40)]{I}. 
Let $z$ be any large real number, and let $r$ be any positive integer.  
Then we have 
\begin{align}                                                              \label{K}
\sum_{n\leq z}\frac{\log^{r}n}{n^s} = (-1)^{r}\zeta^{(r)}(s) 
 - \lim_{y\to \infty}\sum_{z<n\leq y}\frac{\log^{r}n}{n^s}
\end{align}
for any real number $s>1$. Using the fact that   
\begin{align*}
 \sum_{n\leq z}\log n = z\log z - z + O\left(\log z \right)    
\end{align*}
and the partial summation, we get 
\begin{align}  
\label{eq3}
\lim_{y\to \infty}\sum_{z<n\leq y}\frac{\log n}{n^s} = -\frac{1}{1-s}z^{1-s}\log z + \frac{1}{(1-s)^2}z^{1-s} +O\left(z^{-s}\log z\right).
\end{align}
Taking in the above formula $z=x^{1/k}$, $s=k$ and substituting Eq.~\eqref{eq3} into Eq.~\eqref{K} with $r=1$, we complete the proof of  Eq.~\eqref{For-111}.  \\

Similarly to the above, we use the following formula, \cite[Theorem 6.11]{N},
$$
\sum_{n\leq z}\log^{2} n = z\log^{2} z - 2z \log z + 2z + O\left(\log^{2} z \right) 
$$
and the partial summation to get 
\begin{align}  
\label{eq5}
\lim_{y\to \infty} \sum_{z<n\leq y}\frac{\log^{2}n}{n^s} &=  -\frac{1}{1-s}z^{1-s}\log^{2}z   \nonumber \\
&+ \frac{2}{(1-s)^2}z^{1-s}\log z   - \frac{2}{(1-s)^3}z^{1-s} +O\left(z^{-s}\log^{2} z \right).
\end{align}
Taking $z=x^{1/k}$ and $s=k$ in the latter formula and substituting Eq.~\eqref{eq5} into Eq.~\eqref{K} with  $r=2$, we get Eq.~\eqref{For-112}.
It remains to prove Eq.~\eqref{For-113}. Notice that
\begin{align*}
\sum_{n\leq z}\log^{3}n 
&= [z]\log^{3}z - 3\int_{1}^{z}\frac{[t]\log^{2}t}{t}dt  \\
&= z\log^{3}z - 3z \log^{2}z + 6z\log z -6z + O\left(\log^{3}z \right).
\end{align*}
Using the partial summation, we obtain that
\begin{align}  
\label{eq6}
\lim_{y\to \infty} \sum_{z<n\leq y}\frac{\log^{3}n}{n^s} 
&=  -\frac{1}{1-s}z^{1-s}\log^{3}z  + \frac{3}{(1-s)^2}z^{1-s}\log^{2} z  \nonumber \\
& - \frac{6}{(1-s)^3}z^{1-s}\log~{} z  + \frac{6}{(1-s)^4}z^{1-s} +O\left(z^{-s}\log^{3} z \right).
\end{align}
Combining Eqs.~\eqref{eq6} and \eqref{K} with $z=x^{1/k}$, $s=k$ and  $r=3$, we get the desired conclusion. 
\end{proof}
%%%%%%%%%%%%%%%%%%%%%%%%%%%%%%%%%%%%%%%%%%%%%%%%%%%%%%%%%%%%%%%%%%%%%%%%%%
Now, for any large real number $x>5$, we define
$$
M(x):=\sum_{n\leq x}\mu*\mu(n)
$$
It is known that $M(x)$ can be estimate by, see~\cite[Eq.~(4.11)]{I1}, 
\begin{align}                                                              \label{I-K} 
M(x) = O\left(x\varepsilon(x)\right),
\end{align}
where 
\begin{align}                                                              \label{eta}
\varepsilon(x) = {\rm exp}\left(-C \frac{(\log x)^{3/5}}{(\log\log x)^{1/5}}\right)
\end{align}
with  $C$ being a positive constant. Under the above hypotheses, we are ready to state the following result. 

%%%%%%%%%%%%%%%%%%%%%%%%%%%%%%%%%%%%%%%%%%%%%%%%%%%%%%%%%%%%%%%%%%%%%%%%%%%
\begin{lem}     
\label{lem30}
For any real number $x>5$ and any integers $k\geq 3$,  we have 
\begin{align} 
\label{mu-10}  
\sum_{n\leq {x^{\frac1k}}}\frac{\mu*\mu(n)}{n^k} 
&= \frac{1}{\zeta^{2}(k)} + O\left(x^{\frac{1-k}{k}}\varepsilon(x)\right), 
\end{align}
and, for $r$ any positive integer, 
\begin{align}
\label{mu-11}
\sum_{n\leq {x^{\frac1k}}}\frac{\mu*\mu(n)\log^{r}n}{n^k} 
&= M_{k,r}(k) +  O\left(x^{\frac{1-k}{k}}\varepsilon(x) \right)       
\end{align}
where  $\varepsilon(x)$ is given by \eqref{eta}. Here $M_{k,r}(k)$ are  certain constants depending on the Riemann zeta-function. 
Moreover, we have 
$$
M_{k,1}(k)=  2\frac{\zeta'(k)}{\zeta^{3}(k)}, \quad 
M_{k,2}(k)= 2\frac{3(\zeta'(k))^2 - \zeta''(k)\zeta(k)}{\zeta^{4}(k)}, 
$$
$$  %and \begin{align*}
  M_{k,3}(k)
=   \frac{2}{\zeta^{2}(k)}\left(12\left(\frac{\zeta'(k)}{\zeta(k)}\right)^{3} +3 \frac{\zeta'(k)}{\zeta(k)}\cdot \frac{\zeta''(k)}{\zeta^{}(k)}
-   \frac{\zeta^{(3)}(k)}{\zeta(k)}\right).  
$$ %$\end{align*}
\end{lem}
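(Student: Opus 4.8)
The plan is to treat both \eqref{mu-10} and \eqref{mu-11} by the same device: complete the truncated Dirichlet sum to the full series, identify the full series in closed form, and bound the resulting tail with the mean-value estimate \eqref{I-K}. The basic input is that, since $|\mu*\mu(n)|\le\tau(n)$, the series $\sum_{n\ge1}\mu*\mu(n)n^{-s}=(\sum_{n\ge1}\mu(n)n^{-s})^{2}=\zeta^{-2}(s)$ converges absolutely for $\Re(s)=k\ge 3$. Writing $z=x^{1/k}$ and splitting off the tail gives
\[
\sum_{n\le z}\frac{\mu*\mu(n)}{n^{k}}=\frac{1}{\zeta^{2}(k)}-\sum_{n>z}\frac{\mu*\mu(n)}{n^{k}},
\]
so \eqref{mu-10} amounts to bounding the tail. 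For \eqref{mu-11}, differentiating the series $r$ times in $s$ brings down a factor $(-\log n)^{r}$, whence
\[
\sum_{n\ge1}\frac{\mu*\mu(n)\log^{r}n}{n^{s}}=(-1)^{r}\frac{d^{r}}{ds^{r}}\,\zeta^{-2}(s),
\]
and $M_{k,r}(k)$ is precisely the value of this series at $s=k$; again the truncation produces a tail to be estimated.

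I would estimate the tails by Abel summation. With $f(t)=t^{-k}\log^{r}t$ (the case $r=0$ giving \eqref{mu-10}) and $A(t)=M(t)=\sum_{n\le t}\mu*\mu(n)$, partial summation yields
\[
\sum_{n>z}\frac{\mu*\mu(n)\log^{r}n}{n^{k}}=-M(z)f(z)-\int_{z}^{\infty}M(t)f'(t)\,dt,
\]
where the boundary term at infinity vanishes because $M(y)f(y)\ll y^{1-k}\log^{r}y\,\varepsilon(y)\to0$ for $k\ge3$. Inserting $M(t)\ll t\varepsilon(t)$ from \eqref{I-K}, using $|f'(t)|\ll t^{-k-1}\log^{r}t$, and exploiting that the function $\varepsilon(t)$ in \eqref{eta} is eventually decreasing, both terms are $\ll\varepsilon(z)\int_{z}^{\infty}t^{-k}\log^{r}t\,dt\ll z^{1-k}\log^{r}z\,\varepsilon(z)$. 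Substituting $z=x^{1/k}$ gives $z^{1-k}=x^{(1-k)/k}$, while $\varepsilon(x^{1/k})$ is again of the shape \eqref{eta} (with $C$ replaced by $Ck^{-3/5}$), and the polynomial factor $\log^{r}z\ll\log^{r}x$ is absorbed into it after a further harmless shrinking of the constant, since $\exp(r\log\log x)$ is negligible against the exponential in \eqref{eta}. This produces the stated error $O(x^{(1-k)/k}\varepsilon(x))$ in both formulas.

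It then remains to identify the constants. For \eqref{mu-10} the main term is exactly $\zeta^{-2}(k)$. For \eqref{mu-11} I would compute the derivatives of $\zeta^{-2}$ by logarithmic differentiation, starting from $(\zeta^{-2})'=-2(\zeta'/\zeta)\zeta^{-2}=-2\zeta'\zeta^{-3}$; a second differentiation gives $(\zeta^{-2})''=6(\zeta')^{2}\zeta^{-4}-2\zeta''\zeta^{-3}$, and a third differentiation yields a three-term expression in $\zeta'$, $\zeta''$ and $\zeta^{(3)}$. Multiplying by $(-1)^{r}$ and evaluating at $s=k$ then delivers $M_{k,1}(k)$, $M_{k,2}(k)$ and $M_{k,3}(k)$ in the stated forms.

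I expect the main obstacle to be the error bookkeeping rather than any conceptual difficulty: one must verify that the extra logarithmic factors --- both the $\log^{r}$ coming from the weight and the loss incurred in passing from $z=x^{1/k}$ back to $x$ --- are genuinely swallowed by the subexponential factor $\varepsilon$, so that a single clean error term $O(x^{(1-k)/k}\varepsilon(x))$ holds uniformly in $r$. The repeated differentiation needed for $M_{k,3}$ is routine but must be carried out carefully to pin down the coefficients.
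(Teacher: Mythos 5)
Your proposal is correct and is essentially the paper's own proof: complete the truncated sum to the full Dirichlet series $\sum_{n\geq 1}\mu*\mu(n)n^{-s}=\zeta^{-2}(s)$ (respectively its $r$-fold derivative), and bound the tail by partial summation against $M(t)\ll t\varepsilon(t)$ from \eqref{I-K}; this is exactly what the paper does in Eqs.~\eqref{L}--\eqref{NN}. You are in fact more scrupulous than the paper about two points it passes over silently: the tail bound naturally arrives as $z^{1-k}\log^{r}z\,\varepsilon(z)$ with $z=x^{1/k}$, and both replacing $\varepsilon(x^{1/k})$ by $\varepsilon(x)$ and absorbing the factor $\log^{r}z$ cost only a harmless decrease of the constant $C$ in \eqref{eta}.

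One warning about your final step: if you carry out the third differentiation honestly, you will get
\[
-\frac{d^{3}}{ds^{3}}\,\zeta^{-2}(s)
=\frac{2}{\zeta^{2}(s)}\left(12\left(\frac{\zeta'(s)}{\zeta(s)}\right)^{3}
-9\,\frac{\zeta'(s)}{\zeta(s)}\cdot\frac{\zeta''(s)}{\zeta(s)}
+\frac{\zeta^{(3)}(s)}{\zeta(s)}\right),
\]
i.e.\ coefficients $(12,-9,+1)$, not the $(12,+3,-1)$ displayed for $M_{k,3}(k)$ in the lemma, so your computation will \emph{not} land in the ``stated form'' for $r=3$. This is a typo in the lemma's statement (reproduced in its proof), not a flaw in your method: the coefficients $(12,-9,+1)$ are precisely the ones the paper itself uses when it applies $M_{4,3}(4)$ in Eq.~\eqref{S-24}.
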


\begin{proof}
 We recall that, for $z$ any large real number and any  $s>1$,   
\begin{align}                                                                             \label{L}
\sum_{n\leq z}\frac{\mu*\mu(n)}{n^s} =  \frac{1}{\zeta^{2}(s)}    
                                    - \lim_{y\to \infty}\sum_{z<n\leq y}\frac{\mu*\mu(n)}{n^s}.
\end{align}
By Eq.~\eqref{I-K}, the partial summation and letting $y\to \infty$, we obtain  
\begin{align}                                                              \label{LL}
\lim_{y\to \infty}\sum_{z<n\leq y} \frac{\mu*\mu(n)}{n^s} 
= O\left(z^{1-s} \varepsilon(z)\right).
\end{align}
substituting \eqref{LL} into \eqref{L} with $z=x^{1/k}$ and $s=k$, we get Eq.~\eqref{mu-10}.  

Similarly to the above, we prove Eq.~\eqref{mu-11}. Notice that
\begin{align}                                                                             \label{N}
\sum_{n\leq z}\frac{\mu*\mu(n)\log^{r}n}{n^s} = \sum_{n=1}^{\infty}\frac{\mu*\mu(n)\log^{r} n}{n^s}    
                                    - \lim_{y\to \infty}\sum_{z<n\leq y}\frac{\mu*\mu(n) \log^{r} n}{n^s},
\end{align}
and that 
$$
\sum_{n=1}^{\infty}\frac{\mu*\mu(n) \log n}{n^s} = 2\frac{\zeta'(s)}{\zeta^{3}(s)}, 
\quad  
  \sum_{n=1}^{\infty}\frac{\mu*\mu(n) \log^2 n}{n^s}= 2\frac{3(\zeta'(s))^2 - \zeta''(s)\zeta(s)}{\zeta^{4}(s)},   
$$
$$
 \sum_{n=1}^{\infty}\frac{\mu*\mu(n) \log^3 n}{n^s} 
=   \frac{2}{\zeta^{2}(s)}\left(12\left(\frac{\zeta'(s)}{\zeta(s)}\right)^{3} +3 \frac{\zeta'(s)}{\zeta(s)}\cdot \frac{\zeta''(s)}{\zeta^{}(s)}
-   \frac{\zeta^{(3)}(s)}{\zeta(s)}\right).  
$$
Again by \eqref{I-K}  and using the partial summation, we find that  
\begin{align}                                                           \label{NN}
\lim_{y\to \infty}\sum_{z<n\leq y}\frac{\mu*\mu(n)\log^{r}n}{n^s} 
&= O\left(z^{1-s}\varepsilon(z) \log^{r}z\right)  
 = O\left(z^{1-s}\varepsilon(z) \right)                                                   
\end{align}
substituting \eqref{NN} into Eq.~\eqref{N} with $z=x^{1/k}$ and $s=k$,    Eq.~\eqref{mu-11} is proved.  
\end{proof}
%%%%%%%%%%%%%%%%%%%%%%%%%%%%%%%%%%%%%%%%%%%%%%%%%%%%%%%%%%%%%%%%%%%%%%%%%%%%%%%%%%%%%%%%%%%%%%%%%%%%%%%%%%%%%%%%%%%%%%%%%%%%%%%%%%%%%%%%%%%%%%%%%%%%%
\section{Proofs} 
\subsection{Proof of Theorem \ref{th11}}
\label{subsec1}
In case of $ k=3$, we substitute Eq.~\eqref{d3} into Eq.~\eqref{tau-k}  and then we use Eqs.~\eqref{For-10}, \eqref{For-111}, and \eqref{For-112} of Lemma~\ref{lem20} to obtain 
\begin{align*}                                                          
S_{\tau,3}(x) 
&=\left(b_{1}\log^{2}x + b_{2}\log^{}x  +b_{3} \right)x \sum_{n\leq {x^{\frac13}}}\frac{1}{n^3}   \\  
&- \left(6b_{1}\log^{} x + 3b_{2} \right) x \sum_{n\leq {x^{\frac13}}}\frac{\log n}{n^3} 
+ 9b_{1}x  \sum_{n\leq x^{\frac13}}\frac{\log^{2}n}{n^3} 
 + \sum_{n\leq {x^{\frac13}}}\Delta_{3}\left(\frac{x}{n^3}\right)   \\
&=\frac{\zeta(3)}{2}x\log^{2}x + \zeta(2)\left(3\gamma -1 +3\frac{\zeta'(3)}{\zeta(3)}\right)x\log x   \\
&+\zeta(3)\left(3\gamma^{2}-3\gamma + 3\gamma_{1}+1 +3(3\gamma-1)\frac{\zeta'(3)}{\zeta(3)}+\frac{9}{2}\frac{\zeta''(3)}{\zeta(3)}\right)x  \\
&- \frac{1}{8} \left(12\gamma^{2} -30\gamma +12\gamma_{1} +19\right)x^{1/3} 
+ \sum_{m\leq {x^{\frac13}}}\Delta_{3}\left(\frac{x}{n^3}\right)   + O\left(\log^{2}x\right).  
\end{align*}
Now, we use the estimate, see \cite{K},  
$$
\Delta_{3}(x) \ll x^{\frac{43}{96}+\varepsilon} 
$$
to get
\begin{align*}
     \sum_{n\leq x^{\frac13}} \Delta_{3}\left(\frac{x}{n^3}\right)
&\ll x^{\frac{43}{96}+\varepsilon} \sum_{n\leq x^{\frac13}}\frac{1}{n^{\frac{43}{32}}} 
\ll  x^{\frac{43}{96}+\varepsilon},
\end{align*} 
for any small number $\varepsilon>0$. This completes the proof of Eq.~\eqref{S-13}. 

In the case of $k=4$, we substitute  Eq.~\eqref{d4} into Eq.~\eqref{tau-k} 
\begin{align*}                                                          
& S_{\tau,4}(x)  
 =\left(c_{1}\log^{3}x + c_{2}\log^{2}x +c_{3}\log x +c_{4} \right)x \sum_{n\leq {x^{\frac14}}}\frac{1}{n^4}  \\  
&- 4\left(3c_{1}\log^{2} x + 2c_{2}\log x +c_{3}\right) x \sum_{n\leq {x^{\frac14}}}\frac{\log n}{n^4}  \\
&+ 4^{2}\left(3c_{1}\log x+ c_{2}\right)x  \sum_{n\leq x^{\frac14}}\frac{\log^{2}n}{n^4} 
 - 4^{3}c_{1}x \sum_{n\leq x^{\frac14}}\frac{\log^{3}n}{n^4}  +  \sum_{n\leq {x^{\frac14}}}\Delta_{4}\left(\frac{x}{n^4}\right),
 \end{align*}
 and use Lemma~\ref{lem20} to obtain 
\begin{align*} 
&S_{\tau,4}(x)  =\frac{\zeta(4)}{6} x\log^{3}x + \zeta(4) \left(2\gamma -\frac12 + 2\frac{\zeta'(4)}{\zeta(4)}\right)x \log^{2}x   \\  
& +  \zeta(4) \left(6\gamma^{2} -4\gamma +4\gamma_{1}+1 
+8\left(2\gamma -\frac12\right)\frac{\zeta'(4)}{\zeta(4)}+ 8\frac{\zeta''(4)}{\zeta(4)}\right)x \log x \nonumber  \\
& +  \zeta(4)\left(12\gamma_{}\gamma_{1} + 4\gamma^{3} -6\gamma^{2}+4(\gamma -\gamma_{1}+\gamma_{2})-1 
+4(6\gamma^{2}-4\gamma + 4\gamma_{1}+1)\frac{\zeta'(4)}{\zeta(4)} \right)x   \nonumber  \\
&+16\zeta(4)\left(\left(2\gamma -\frac12\right) \frac{\zeta''(4)}{\zeta(4)}+ \frac{2}{3}\frac{\zeta^{(3)}(4)}{\zeta(4)}\right)x  
  + \rho x^{\frac14} +  \sum_{m\leq {x^{\frac14}}}\Delta_{4}\left(\frac{x}{m^4}\right)  + O\left(\log^{3}x\right),   \nonumber 
 \end{align*}
with $\rho$  being a computable constant. 
By Lemma \ref{lem15}, we have
\begin{align*}
     \sum_{m\leq x^{\frac14}} \Delta_{4}\left(\frac{x}{m^4}\right)
&\ll x^{\frac{1}{2}+\varepsilon} \sum_{m\leq x^{\frac14}}\frac{1}{m^{2}} 
\ll  x^{\frac{1}{2}+\varepsilon}. 
\end{align*} 
Therefore Eq.~\eqref{S-14} is proved.          
%%%%%%%%%%%%%%%%%%%%%%%%%%%%%%%%%%%%%%%%%%%%%%%%%%%%%%%%%%%%%%%%%%%%%%%%%%%%%%%%%%%%%%%%%%%%%%%%%%%%%%%%%%%%%%%%%%%%%%%%%%%%%%%%%
\subsection{Proof of Theorem~\ref{th221}} 
In much the same way as in Subsection~\ref{subsec1}, we prove Eqs.~\eqref{S-23} and \eqref{S-24}. First, in the case of $k=3$, we substitute Eqs.~\eqref{d3}, \eqref{mu-10}, and \eqref{mu-11} into Eq.~\eqref{D-13} with $r=1,2$ to obtain  
\begin{align*}
&S_{\mu,3}(x)   
 =\frac{1}{2\zeta^{2}(3)} x \log^{2}x  
 + \frac{1}{\zeta^{2}(3)}\left(3\gamma -1 - 6 \frac{\zeta'(3)}{\zeta(3)}\right)x \log x    \\
& + \frac{1}{\zeta^{2}(3)} 
\left(3\gamma^{2} -3\gamma +3\gamma_{1}+1 -6(3\gamma -1) \frac{\zeta'(3)}{\zeta(3)} 
- 9\frac{\zeta''(3)}{\zeta(3)} + 27\left(\frac{\zeta'(3)}{\zeta(3)}\right)^{2}\right)x \\
&+ \sum_{n\leq {x^{\frac13}}}\mu*\mu(n)\Delta_{3}\left(\frac{x}{n^3}\right) + O\left(x^{1/3}\varepsilon(x)\right). 
\end{align*}
Again we use $\Delta_{3}(x)\ll x^{\frac{43}{96}+\varepsilon}$ to get
\begin{align*}
     \sum_{n\leq {x^{\frac13}}}\mu*\mu(n)\Delta_{3}\left(\frac{x}{n^3}\right)
&\ll x^{\frac{43}{96}+\varepsilon}\sum_{n\leq {x^{\frac13}}}\frac{\tau(n)}{n^{\frac{43}{32}}} 
\ll  x^{\frac{43}{96}+\varepsilon}. 
\end{align*} 
This completes the proof of Eq.~\eqref{S-23}.  

Second, for the case of $k=4$ and $r=1,2,3$, we substitute Eqs.~\eqref{d4}, \eqref{mu-10} and \eqref{mu-11} into Eq.~\eqref{D-13} to obtain 
\begin{align}                                                                  \label{33}
&S_{\mu,4}(x)   
=\frac{1}{6\zeta^{2}(4)} x\log^{3}x 
 + \frac{1}{\zeta^{2}(4)}\left(2\gamma -\frac12 - 4\frac{\zeta'(4)}{\zeta(4)}\right) x\log^{2} x               \\
&+ \frac{16}{\zeta^{2}(4)}\left(\frac{6\gamma^{2} -4\gamma +4\gamma_{1}+1}{16} -\left(2\gamma -\frac12\right)\frac{\zeta'(4)}{\zeta(4)}
- \frac{\zeta''(4)}{\zeta(4)} +3\left(\frac{\zeta'(4)}{\zeta(4)}\right)^{2}\right) x \log x \nonumber \\
& +  \frac{1}{\zeta^2(4)}\left({12\gamma_{}\gamma_{1} + 4\gamma^{3} -6\gamma^{2}+4(\gamma -\gamma_{1}+\gamma_{2})-1} 
-8({6\gamma^{2}-4\gamma + 4\gamma_{1}+1}) \frac{\zeta'(4)}{\zeta(4)} \right)x  \nonumber \\
&+\frac{32}{\zeta^2(4)}\left(2\gamma -\frac12\right)\left(3\left(\frac{\zeta'(4)}{\zeta'(4)}\right)^2 - \frac{\zeta''(4)}{\zeta(4)}\right)x  \nonumber \\
&-\frac{64}{3\zeta^{2}(4)}\left(12\left(\frac{\zeta'(4)}{\zeta(4)}\right)^{3} -9 \frac{\zeta'(4)}{\zeta(4)}\cdot \frac{\zeta''(4)}{\zeta^{}(4)}
+  \frac{\zeta^{(3)}(4)}{\zeta(4)}\right) x
+ \sum_{m\leq {x^{\frac14}}}\mu*\mu(m) \Delta_{4}\left(\frac{x}{m^4}\right)  \nonumber \\
& + O\left(x^{1/4}\varepsilon(x)\right). \nonumber 
\end{align}
Using $\Delta_{4}(x)\ll x^{\frac{1}{2}+\varepsilon}$, we get
\begin{align*}
     \sum_{m\leq {x^{\frac14}}}\mu*\mu(m) \Delta_{4}\left(\frac{x}{m^4}\right)
&\ll x^{\frac{1}{2}+\varepsilon}\sum_{m\leq {x^{\frac14}}}\frac{\tau(m)}{m^{2}} 
\ll  x^{\frac{1}{2}+\varepsilon}.
\end{align*} 
This completes the proof of Eq.~\eqref{S-24}.
%%%%%%%%%%%%%%%%%%%%%%%%%%%%%%%%%%%%%%%%%%%%%%%%%%%%%%%%%%%%%%%%%%%%%%%%%%%%%%%%%%%%%%%%%%%%%%%%%%%%%%%%%%%%%%%%%%%%%%%%%%%%%%%%%%%%%%%%%%%%%%%%%%%
\subsection{Proof of Theorem~\ref{th12}}
By the following formula and the partial summation
\begin{align}                                                      \label{log}
\sum_{n\leq z}\log^{r}z = z\log^{r}z + d_{1}z\log^{r-1}z +d_{2}z\log^{r-2}z + \cdots + d_{r}z + O\left(\log^{r}z\right),
\end{align} 
we find that  
$$
\lim_{y \to \infty}\sum_{z<n\leq y}\frac{\log^{r}n}{n^s} = - \frac{z^{1-s}}{1-s}\sum_{j=0}^{r}h_{j}(s) \log^{j}z,
$$
where $h_{j}(s)$ and $d_{j}$ $(1\leq j \leq r)$ are computable constants. By recalling Eq.~\eqref{K}
\begin{align*}                                                             
\sum_{n\leq z}\frac{\log^{r}n}{n^s} = (-1)^{r}\zeta^{(r)}(s) 
 - \lim_{y\to \infty}\sum_{z<n\leq y}\frac{\log^{r}n}{n^s}
\end{align*}
and taking $s=k$ and $z=x^{1/k}$, we deduce
\begin{align}                                                              \label{z}
\sum_{n\leq x^{1/k}}\frac{\log^{r}n}{n^k} 
&= (-1)^{r}\zeta^{(r)}(k) + \frac{x^{\frac{1-k}{k}}}{1-k}\sum_{j=0}^{r}\frac{h_{j}(k)}{k^{j}} \log^{j}x
\end{align} 
with $0\leq r \leq k$. Combining Eqs.~\eqref{tau-k}, \eqref{piltz} and \eqref{z}, we obtain 
\begin{align*}
S_{\tau,k}(x)  
&= \sum_{n\leq x^{1/k}}\left(\frac{x}{n^k} P_{k-1}\left(\log^{}\frac{x}{n^k}\right) + \Delta_{k}\left(\frac{x}{n^k}\right)\right)   \\
&= x\sum_{n\leq x^{1/k}}\frac{1}{n^k} \sum_{r=0}^{k-1} q_{r}\log^{r}\frac{x}{n^k} 
                  +\sum_{n\leq x^{1/k}} \Delta_{k}\left(\frac{x}{n^k}\right) \\
&=x\sum_{r=0}^{k-1}q_{r}\sum_{\ell=0}^{r}{r \choose \ell}(\log x)^{r-\ell} k^{\ell} \sum_{n\leq x^{1/k}}\frac{\log^{\ell}n}{n^k}    
                  +\sum_{n\leq x^{1/k}} \Delta_{k}\left(\frac{x}{n^k}\right) \\   
&=x\sum_{r=0}^{k-1}q_{r}\sum_{\ell=0}^{r} (-k)^{\ell} \zeta^{(\ell)}(k) {r \choose \ell}(\log x)^{r-\ell}    
                  +\sum_{e\leq x^{1/k}} \Delta_{k}\left(\frac{x}{n^k}\right) 
                  +O\left(x^{1/k}\log^{k}x\right) \\    
&= xP_{\tau,k}(\log x) + E_{\tau,k}(x),
\end{align*}
where
$$
E_{\tau,k}(x) = \sum_{n\leq x^{\frac1k}} \Delta_{k}\left(\frac{x}{n^k}\right) + O\left(x^{1/k}\log^{k}x\right),
$$
and $P_{\tau,k}(u)$ is a polynomial in $u$ of degree $k-1$ depending on  the derivative of the Riemann zeta-function.
From Lemma \ref{lem15} with $k\geq 5$, we have   
$$
\sum_{n\leq x^{\frac1k}} \Delta_{k}\left(\frac{x}{n^k}\right) \ll 
x^{\alpha_{k}+\varepsilon}\sum_{n\leq x^{1/k}}\frac{1}{n^{k\alpha_{k}+\varepsilon}} 
\ll   x^{\alpha_{k}+\varepsilon},
$$
where we used the fact that $k\alpha_{k}>2$. This completes the proof of  Theorem \ref{th12} in the case $g=\tau$. 

%%%%%%%%%%%%%%%%%%%%%%%%%%%%%%%%%%%%%%%%%%%%%%%%%%%%%%%%%%%%%%%%%%%%%%%%%%%
Similar arguments apply to the case $g=\mu.$ From  Eqs.~\eqref{D-13}, \eqref{mu-10} and \eqref{mu-11}, we get
\begin{align*}
&S_{\mu,k}(x)  
=x\sum_{r=0}^{k-1}q_{r}\sum_{\ell=0}^{r}{r \choose \ell}(\log x)^{r-\ell} k^{\ell} \sum_{n\leq x^{1/k}}\mu*\mu(n)\frac{\log^{\ell}n}{n^k}    
                  +\sum_{n\leq x^{1/k}}\mu*\mu(n) \Delta_{k}\left(\frac{x}{n^k}\right) \\ 
 &=x\sum_{r=0}^{k-1}q_{r}\sum_{\ell=0}^{r}{r \choose \ell}(\log x)^{r-\ell} k^{\ell}M_{k,\ell}(k) 
 +\sum_{m\leq x^{1/k}}\mu*\mu(m) \Delta_{k}\left(\frac{x}{m^k}\right)   + O\left(x^{\frac{1}{k}}\varepsilon(x)\right)\\   
&= xP_{\mu,k}(\log x) + E_{\mu,k}(x), 
\end{align*}
where  
$$
E_{\mu,k}(x) = \sum_{n\leq x^{\frac1k}} \mu*\mu(n) \Delta_{k}\left(\frac{x}{n^k}\right) + O\left(x^{1/k}\varepsilon(x)\right),
$$
and  $M_{k,0}(x)=1/\zeta^{2}(2)$.
From Lemma \ref{lem15}, the above sums can be estimated by
$$
x^{\alpha_{k}+\varepsilon}\sum_{n\leq x^{1/k}}\frac{\tau(n)}{n^{k\alpha_{k}+\varepsilon}} 
\ll   x^{\alpha_{k}+\varepsilon}.
$$
This completes the proof of our theorem.     
%%%%%%%%%%%%%%%%%%%%%%%%%%%%%%%%%%%%%%%%%%%%%%%%%%%%%%%%%%%%%%%%%%%%%%%%%%%%%%%%%%%%%%%%%%%%%%%%%%%%%%%%%%%%%%%%%%%%%%%%%%%%%%%%%%%%%%%%%%%%%%%%%

\section*{Acknowledgement}
The authors express their gratitude to Professor W{\l}adys{\l}aw Narkiewicz for carefully reading the paper and useful comments.
The second author is supported by the Austrian Science Fund (FWF): Projects F5507-N26 and F5505-N26, which are part
of the Special Research Program  ``Quasi Monte
Carlo Methods: Theory and Applications''.
%%%%%%%%%%%%%%%%%%%%%%%%%%%%%%%%%%%%%%%%%%%%%%%%%%%%%%%%%%%%%%%%%%%%%%%%%%%%%%%%%%%%%%%%%%%%%%%%%%%%%%%%%%%%%%%%%%%%%%%%%%%%%%%%%%%%%%%%%%%%%%%%%

\medskip\noindent {\footnotesize Isao Kiuchi: 
Department of Mathematical Sciences,
Faculty of Science, Yamaguchi University,
Yoshida 1677-1, Yamaguchi 753-8512, Japan.
e-mail: {\tt kiuchi@yamaguchi-u.ac.jp}}

\medskip\noindent {\footnotesize Sumaia Saad Eddin: 
Institute of Financial Mathematics and Applied Number Theory,
Johannes Kepler University,
Altenbergerstrasse 69, 4040 Linz, Austria.
e-mail: {\tt sumaia:saad\_eddin@jku.at}}

\end{document}